\newcommand{\C}{\mathbb{C}}
\newcommand{\LLL}{\mathbb{L}}
\newcommand{\QQ}{\mathbb{Q}}
\newcommand{\NN}{\mathbb{N}}
\newcommand{\PP}{\mathbb{P}}
\newcommand{\MM}{\mathcal M}
\newcommand{\gr}{\hbox{Gr}}
\newcommand{\rom}{\romannumeral}
\DeclareMathOperator{\aut}{Aut}
\newtheorem{theorem}{Theorem}[section]
\newtheorem{corollary}[theorem]{Corollary}
\newtheorem{proposition}[theorem]{Proposition}
\newtheorem{conjecture}[theorem]{Conjecture}
\newtheorem{remark}[theorem]{Remark}
\newtheorem{definition}[theorem]{Definition}
\newtheorem{convention}{Conventions}
\newtheorem{nonumbering}{Theorem}
\newtheorem{nonumberingc}{Corollary}
\newtheorem{nonumberingt}{Acknowledgements}
\begin{document}
\author[Robert Laterveer]
{Robert Laterveer}

\address{Institut de Recherche Math\'ematique Avanc\'ee,
CNRS -- Universit\'e 
de Strasbourg,\
7 Rue Ren\'e Des\-car\-tes, 67084 Strasbourg CEDEX,
FRANCE.}
\email{robert.laterveer@math.unistra.fr}

\title{On Voisin's conjecture for zero--cycles on hyperk\"ahler varieties}

\begin{abstract} Motivated by the Bloch--Beilinson conjectures, Voisin has made a conjecture concerning zero--cycles on self--products of Calabi--Yau varieties. We reformulate Voisin's conjecture in the setting of hyperk\"ahler varieties, and we prove this reformulated conjecture for one family of hyperk\"ahler fourfolds.
\end{abstract}

\keywords{Algebraic cycles, Chow groups, motives, Bloch's conjecture, Bloch--Beilinson filtration, hyperk\"ahler varieties, multiplicative Chow--K\"unneth decomposition, splitting property, finite--dimensional motive}
\subjclass[2010]{Primary 14C15, 14C25, 14C30.}

\maketitle

\section{Introduction}

Let $X$ be a smooth projective variety over $\C$, and let $A^i(X):=CH^i(X)_{\QQ}$ denote the Chow groups of $X$ (i.e. the groups of codimension $i$ algebraic cycles on $X$ with $\QQ$--coefficients, modulo rational equivalence). As is well--known, the field of algebraic cycles is rife with open questions \cite{B}, \cite{J2}, \cite{MNP}, \cite{Vo}. To wit, there is the following conjecture formulated by Voisin, which can be seen as a version of Bloch's conjecture for varieties of geometric genus one:

\begin{conjecture}[Voisin \cite{V9}]\label{conjvois} Let $X$ be a smooth projective complex variety of dimension $n$ with $h^{j,0}(X)=0$ for $0<j<n$ and $p_g(X)=1$.  
For any zero--cycles $a,a^\prime\in A^n(X)$ of degree zero, we have
  \[ a\times a^\prime=(-1)^n a^\prime\times a\ \ \ \hbox{in}\ A^{2n}(X\times X)\ .\]
  (Here $a\times a^\prime$ is short--hand for the cycle class $(p_1)^\ast(a)\cdot(p_2)^\ast(a^\prime)\in A^{2n}(X\times X)$, where $p_1, p_2$ denote projection on the first, resp. second factor.)
   \end{conjecture}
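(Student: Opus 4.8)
\emph{Proof strategy (proposal).} Notice first that $a\times a'-(-1)^n a'\times a$ is a zero--cycle of degree zero on $X\times X$, hence automatically homologically trivial; the whole content of the conjecture is to promote this to rational equivalence. The plan is to confine the cycle to a small summand of the motive of $X\times X$ and then to exploit finite--dimensionality. Assume $X$ carries a Chow--K\"unneth decomposition $\{\pi_i\}_{i=0}^{2n}$ that can be refined so as to split off the transcendental motive $t^n(X):=(X,\pi_n^{\mathrm{tr}},0)$, characterised by $H^\ast(t^n(X))=H^n(X,\QQ)_{\mathrm{tr}}$ sitting in degree $n$. The hypothesis $h^{j,0}(X)=0$ for $0<j<n$ forces, by Serre duality and Hodge symmetry, the vanishing of the extremal Hodge numbers of all intermediate cohomology of $X$; in particular $H^{2n-1}(X)=0$, so --- granting Murre's conjectures for $X$ --- a degree--zero zero--cycle $a$ has trivial $H^{2n-1}$--part, and once the remaining intermediate components are disposed of (they are governed by cohomology carrying no holomorphic forms and need a separate, softer argument) one is reduced to the transcendental case $a=(\pi_n^{\mathrm{tr}})_\ast a$, $a'=(\pi_n^{\mathrm{tr}})_\ast a'$. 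Then $a\times a'$ represents a class in $A^{2n}\bigl(t^n(X)\otimes t^n(X)\bigr)$, and writing $\tau$ for the involution exchanging the two factors one gets
\[
a\times a'-(-1)^n a'\times a=\bigl(\mathrm{id}-(-1)^n\tau\bigr)_\ast(a\times a')\ \in\ A^{2n}(N),
\]
where $N$ is the direct summand of $t^n(X)\otimes t^n(X)$ on which $\tau$ acts by $-(-1)^n$; taking the Koszul sign of the motivic tensor category into account, $N=\wedge^2 t^n(X)$ when $n$ is even and $N=\mathrm{Sym}^2 t^n(X)$ when $n$ is odd.

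The next step would be to prove $A^{2n}(N)_{\mathrm{hom}}=0$, since the cycle above lies in this group. Here $p_g(X)=1$, that is $h^{n,0}(X)=1$, is decisive: in both parity cases the one--dimensional line $H^{n,0}(X)\otimes H^{n,0}(X)$ falls into the summand of $H^\ast\bigl(t^n(X)\otimes t^n(X)\bigr)$ complementary to $N$, so $h^{2n,0}(N)=0$ and $N$ carries no holomorphic $2n$--form. If in addition the transcendental Hodge structure $H^n(X,\QQ)_{\mathrm{tr}}$ is small enough that $H^\ast(N)$ is entirely of Tate type --- the cleanest case being $\dim H^n(X)_{\mathrm{tr}}=2$, where $H^\ast(N)$ is one--dimensional of bidegree $(n,n)$ --- then, assuming that $h(X)$ (hence $N$) is finite--dimensional in the sense of Kimura--O'Sullivan and that the finitely many Hodge classes in $H^\ast(N)$ are algebraic, the finite--dimensional incarnation of Bloch's conjecture shows $N\cong\bigoplus\LLL^{\otimes k}$, whence $A^{2n}(N)_{\mathrm{hom}}=0$ and the conjecture follows for $X$.

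The step I expect to be the main obstacle is precisely this last one, and it is where the geometry of the particular variety has to enter. On the one hand one must establish finite--dimensionality of the motive of $X$: this is known for abelian varieties, varieties dominated by products of curves, many K3--type varieties and Fano varieties of lines on cubic fourfolds, and for the variety at hand one would invoke or prove it, typically by exhibiting a dominant correspondence from a variety whose motive is already known to be finite--dimensional. On the other hand --- the genuinely hard point --- ``$A^{2n}(N)_{\mathrm{hom}}=0$'' is \emph{false} in general: as soon as some $h^{p,q}(X)$ with $p+q=n$, $p\neq q$, $\{p,q\}\neq\{0,n\}$ is non--zero, $N$ acquires transcendental cohomology and $A^{2n}(N)_{\mathrm{hom}}\neq 0$, so finite--dimensionality alone does not suffice. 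Forcing the external product $A^n(t^n(X))\otimes A^n(t^n(X))\to A^{2n}(X\times X)$ to be $\tau$--invariant up to the sign $(-1)^n$ then demands a further structural input: either a \emph{multiplicative} Chow--K\"unneth decomposition in the sense of Shen--Vial --- which makes the induced bigrading on Chow groups compatible with the intersection product and so matches the sign automatically --- or a concrete geometric model (a dominant rational map from a product, or a sweeping family of subvarieties for which the analogous statement is already available) permitting a spreading--out argument. My proposal is thus: carry out the motivic reduction above in general, and then for the variety under consideration verify finite--dimensionality and, via a suitable associated surface or via an MCK structure, control the transcendental part of $A^{2n}(N)$; these two verifications are where the real work lies. (In the hyperk\"ahler reformulation the hypotheses of the Conjecture do not hold literally and one replaces $H^n(X)$ by the second cohomology in the definition of the transcendental motive, but the line of attack is the same.)
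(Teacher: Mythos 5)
You were asked to prove Conjecture \ref{conjvois}, but be aware that the paper does not prove it either: it is stated as an open conjecture of Voisin, explicitly described as wide open even for a general $K3$ surface, and the actual result of the paper is a reformulated statement (conjecture \ref{conjHK}) proved only for one specific family of hyperk\"ahler fourfolds (theorem \ref{main}). Your text, accordingly, is a strategy outline and not a proof, and it has genuine gaps at exactly the points you flag. First, the reduction to the transcendental summand is not innocuous: a refined Chow--K\"unneth decomposition splitting off $t^n(X)$ is only available in general for surfaces (Kahn--Murre--Pedrini), and ``disposing of the remaining intermediate components'' of a degree-zero zero-cycle is not a softer argument --- it amounts to Bloch--Beilinson/Murre-type vanishing statements (that the pieces of $A^n(X)$ governed by cohomology without holomorphic forms do not contribute), which are themselves open conjectures. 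Second, your decisive step $A^{2n}(N)_{\mathrm{hom}}=0$ is, as you admit, false in general and is only plausible under a stack of unproven hypotheses (finite-dimensionality of $h(X)$, algebraicity of the Hodge classes of $N$, smallness of $H^n(X)_{\mathrm{tr}}$). So what you have written is a conditional reduction, not a proof of the statement; no argument along these lines could be complete, since the statement is open.

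That said, your diagnosis of where the real work lies matches what the paper actually does in the case it can handle. For the Fano fourfolds of theorem \ref{main}, the proof reduces zero-cycles in $A^4_{(2)}(X)$ and $A^4_{(4)}(X)$ to classes in $A^2_{(2)}(X)$ using the Shen--Vial Fourier decomposition (theorem \ref{chowringfano}), transfers $A^2_{(2)}(X)$ isomorphically to $A^2_{(2)}$ of an abelian surface $B=E\times E'$ arising as the fixed locus of a symplectic automorphism, upgrades the relevant homological isomorphism of motives to rational equivalence using finite-dimensionality (proposition \ref{fanofindim}, proved via the Shioda inductive structure and \cite{fano}), and concludes with Voisin's theorem \ref{thabvar} for abelian varieties. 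This is precisely the ``concrete geometric model plus finite-dimensionality'' input you predicted would be needed; but note the paper uses $H^2$ (not middle cohomology) and the multiplicative structure of the Fourier decomposition, rather than a transcendental motive $t^n(X)$ and a symmetric/alternating summand $N$, and it never claims anything about conjecture \ref{conjvois} itself beyond citing known special cases.
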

   
 Conjecture \ref{conjvois} is still wide open for a general $K3$ surface (on the positive side, cf. \cite{V9}, \cite{moi}, \cite{des}, \cite{tod} for some cases where this conjecture is verified).

Let us now suppose that $X$ is a hyperk\"ahler variety (i.e., a projective irreducible holomorphic symplectic manifold, cf. \cite{Beau0}, \cite{Beau1}). Conjecture \ref{conjvois} does not apply verbatim to $X$ (the Calabi--Yau condition 
$h^{j,0}(X)=0$ for $0<j<n$ is not satisfied), yet one can adapt conjecture \ref{conjvois} to make sense for $X$. For this adaptation, we will optimistically assume the Chow ring of $X$ has a bigraded ring structure $A^\ast_{(\ast)}(X)$, where each $A^i(X)$ splits into pieces
  \[ A^i(X) =\bigoplus_j A^i_{(j)}(X)\ .\]
 (Conjecturally, such a splitting exists for all hyperk\"ahler varieties, and the piece $A^i_{(j)}(X)$ should be isomorphic to the graded $\gr^j_F A^i(X)$ for the conjectural Bloch--Beilinson filtration \cite{Beau3}. Using the concept of ``(weak) multiplicative Chow--K\"unneth decompositions'', a bigraded ring structure has been constructed for Hilbert schemes of length $n$ subschemes of $K3$ surfaces \cite{V6}, for generalized Kummer varieties \cite{FTV}, and for Fano varieties of lines in very general cubic fourfolds \cite{SV}. For a different approach to the construction of a bigraded ring structure on the Chow ring of hyperk\"ahler varieties, cf. \cite{V14}.) 
 
 Since the piece $A^i_{(j)}(X)$ should be related to the cohomology group $H^{2i-j}(X)$, and 
   \[ \wedge^2  H^{s}(X)\ \subset\ H^{2s}(X\times X) \]
   should be supported on a divisor for any $s$ (in view of the generalized Hodge conjecture), we arrive at the following version of conjecture \ref{conjvois}:

\begin{conjecture}\label{conjHK} Let $X$ be a hyperk\"ahler variety of dimension $2m$. Let $a,a^\prime\in A^{2m}_{(j)}(X)$. Then
  \[ a\times a^\prime -a^\prime\times a=0\ \ \ \hbox{in}\ A^{4m}_{}(X\times X)\ .\]
  \end{conjecture}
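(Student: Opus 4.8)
Since Conjecture \ref{conjHK} subsumes the (still open) Conjecture \ref{conjvois} for $K3$ surfaces, I do not expect to prove it in full; in line with the abstract, the realistic target is Conjecture \ref{conjHK} for a family of hyperk\"ahler fourfolds possessing three structural features: a (weak) multiplicative Chow--K\"unneth decomposition $\{\pi^X_i\}$ (this is what gives meaning to the bigrading $A^\ast_{(\ast)}(X)$ in the statement), finite--dimensionality of the Chow motive $\mathfrak h(X)$ in the sense of Kimura--O'Sullivan, and enough control of the transcendental part of $\mathfrak h(X)$ to compute a second exterior power. The plan is to first convert the statement into such a computation, valid for any $X$ carrying an MCK decomposition, and then carry out that computation for the family at hand.

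The reduction runs as follows. Given $a,a'\in A^{2m}_{(j)}(X)$ one may assume $j$ even with $0<j\le 2m$ (odd graded pieces vanish, and for $j=0$ the group $A^{2m}_{(0)}(X)$ is one--dimensional so the asserted identity is trivial). The cycle $a\times a'=p_1^\ast a\cdot p_2^\ast a'\in A^{4m}(X\times X)$ lies in $(\pi^X_{4m-j}\!\times\!\pi^X_{4m-j})_\ast A^{4m}(X\times X)=:V_j$: since $a,a'$ are $\pi^X_{4m-j}$--pure and the $\pi^X_i$ are orthogonal idempotents, $(\pi^X_p\!\times\!\pi^X_q)_\ast(a\times a')=(\pi^X_p)_\ast a\times(\pi^X_q)_\ast a'$ vanishes unless $p=q=4m-j$. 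The transposition $\iota$ of $X\times X$ sends $b\times c$ to $c\times b$ and preserves $V_j$, so $a\times a'-a'\times a=(\operatorname{id}-\iota_\ast)(a\times a')=2(a\times a')^-$, the $(-1)$--eigenpart of $\iota_\ast$; hence Conjecture \ref{conjHK} for this $j$ is equivalent to the vanishing of $V_j^-$, that is, to $A^{4m}\big(\wedge^2\mathfrak h_{4m-j}(X)\big)=0$ (the exterior power being the naive one, $4m-j$ being even). Writing $\mathfrak h_{4m-j}(X)$ as its Lefschetz part plus its transcendental part $\mathfrak h^{\mathrm{tr}}_{4m-j}(X)$ and invoking the motivic form of hard Lefschetz (available for the family), the contributions of the Lefschetz and mixed summands to $V_j^-$ are Chow groups of sums of Lefschetz motives $\mathbb L^{\otimes a}$ with $a<4m$ and of Tate twists of summands of $\mathfrak h(X^2)$ and $\mathfrak h(X^3)$, which vanish by a combination of dimension counts and the known structural vanishings in the bigraded Chow ring of $X$. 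One is thus reduced to proving $A^\bullet\big(\wedge^2\mathfrak h^{\mathrm{tr}}_j(X)\big)=0$ for each even $j$ with $2\le j\le 2m$.

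This last vanishing is where finite--dimensionality enters. Each $\wedge^2\mathfrak h^{\mathrm{tr}}_j(X)$ is a direct summand of a tensor power of $\mathfrak h(X)$, hence finite--dimensional; and a morphism of finite--dimensional Chow motives inducing an isomorphism on cohomology is itself an isomorphism of Chow motives, because a homologically trivial endomorphism of a finite--dimensional motive is nilpotent (Kimura). It therefore suffices to produce, for $X$ in the family, a cohomological isomorphism between $\wedge^2\mathfrak h^{\mathrm{tr}}_j(X)$ and a motive whose relevant Chow group visibly vanishes --- typically a sum of Lefschetz motives $\mathbb L^{\otimes a}$ with $a<4m$, or a Tate twist of the motive of a variety of dimension $<4m$ --- which is a question purely about the Hodge structure $\wedge^2 H^{j}_{\mathrm{tr}}(X)$. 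The only general input here is $h^{2m,0}(X)=1$, which forces $\wedge^2 H^{2m}_{\mathrm{tr}}(X)$, and via hard Lefschetz each $\wedge^2 H^{j}_{\mathrm{tr}}(X)$, to have vanishing top Hodge piece, i.e.\ positive coniveau. For a family in which $H^{2m}_{\mathrm{tr}}(X)$ moreover carries complex multiplication, $\wedge^2$ and $\operatorname{Sym}^2$ of this Hodge structure decompose explicitly into Tate twists of, and of symmetric powers of, $H^{2m}_{\mathrm{tr}}(X)$; the desired match can then be read off, and the finite--dimensionality bridge upgrades it to an isomorphism of Chow motives, yielding the vanishing.

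The main obstacle is precisely this cohomological match in the absence of such extra structure: turning ``positive coniveau of $\wedge^2 H^{2m}_{\mathrm{tr}}(X)$'' into an honest splitting --- or into support on a subvariety of small enough dimension, to be fed back through the finite--dimensionality bridge --- is an instance of the generalized Hodge conjecture, and is exactly the difficulty that leaves Conjecture \ref{conjvois} open even for a general $K3$ surface. Consequently the argument can be pushed through only for families with enough structure on the transcendental cohomology (most cleanly, complex multiplication) or where ad hoc geometry exhibits the relevant subvariety; one is further limited by the fact that finite--dimensionality of $\mathfrak h(X)$ itself is known only for special families --- Hilbert squares of $K3$ surfaces with finite--dimensional motive, generalized Kummer fourfolds, and Fano varieties of lines on special cubic fourfolds. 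It is for one such family that Conjecture \ref{conjHK} is to be established along these lines.
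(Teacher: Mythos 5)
You were asked about Conjecture \ref{conjHK}, which the paper itself does not prove in general; you are right that it subsumes the open Conjecture \ref{conjvois} for $K3$ surfaces, and your decision to aim only at a special family with MCK structure and finite--dimensional motive is exactly the paper's stance (its Theorem \ref{main}). The formal reductions you make are sound: odd pieces vanish, $j=0$ is trivial, and since $A^{4m}(X\times X)$ is spanned by points, the conjecture in degree $j$ does come down to killing the antisymmetric part of $(\pi^X_{4m-j}\times\pi^X_{4m-j})_\ast A^{4m}(X\times X)$. But at the decisive step --- producing a cohomological isomorphism between $\wedge^2 \mathfrak h^{\mathrm{tr}}_{4m-j}(X)$ and a motive with visibly vanishing Chow group, to be upgraded by Kimura finite--dimensionality --- your text only explains why this is hard (it is a generalized--Hodge--conjecture--type matching, needing CM or ad hoc geometry) and does not carry it out for any family. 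So as a proof, even of a special case, the proposal stops exactly where the real work begins.

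The paper's proof of Theorem \ref{main} fills that hole by a different and lower--tech mechanism, which never touches $\wedge^2$ of the degree--$4$ transcendental motive. First, by Shen--Vial's multiplicativity results (Theorem \ref{chowringfano}), $A^4_{(2)}(X)=\ell\cdot A^2_{(2)}(X)$ and $A^2_{(2)}(X)\cdot A^2_{(2)}(X)$ surjects onto $A^4_{(4)}(X)$, so the statement for zero--cycles follows formally once one knows $b\times b'=b'\times b$ for $b,b'\in A^2_{(2)}(X)$ (Proposition \ref{intermed}). That symmetry in turn is geometric: the cubics $f(X_0,X_1,X_2)+g(X_3,X_4,X_5)=0$ admit an order--three symplectic automorphism of $X=F(Y)$ whose fixed locus is an abelian surface $B=E\times E'$; restriction identifies $H^2_{tr}(X)$ with $H^2_{tr}(B)$, giving an isomorphism of homological motives $(X,\pi^X_2,0)\cong(B,\pi^B_{2,tr},0)\oplus\LLL^{\oplus\rho}$, which finite--dimensionality (Shioda inductive structure for $Y$, then \cite{fano} for $F(Y)$) upgrades to an isomorphism of Chow motives; hence $A^2_{(2)}(X)\cong A^2_{(2)}(B)$, and the symmetry is imported from Voisin's theorem $c\times c'=(-1)^g\,c'\times c$ for $A^g_{(g)}$ of a $g$--dimensional abelian variety. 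In short, the ``extra structure'' that makes the special case go through is not complex multiplication on $H^{2m}_{tr}(X)$ but a symplectic automorphism with abelian fixed locus combined with the Fourier ring structure of \cite{SV}; if you want to complete your sketch, that is the concrete input to substitute for your unproven cohomological matching.
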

  
  (We note that in conjecture \ref{conjHK}, we silently presuppose that $A^{2m}_{(j)}(X)=0$ for $j$ odd. This is expected to hold for any hyperk\"ahler variety, and is known unconditionally for the above--mentioned cases where a bigraded ring structure exists.)

  The main result of this note is that conjecture \ref{conjHK} is true for a certain family of hyperk\"ahler fourfolds:
  
  \begin{nonumbering}[=theorem \ref{main}] Let $Y\subset\PP^5(\C)$ be a smooth cubic fourfold defined by an equation
    \[ f(X_0,X_1,X_2)+ g(X_3,X_4,X_5)=0\ ,\]
    where $f$ and $g$ define smooth plane curves. Let $X=F(Y)$ be the Fano variety of lines in $Y$. Then for any $a,a^\prime\in A^4_{(j)}(X)$, there is equality
     \[ a\times a^\prime -a^\prime\times a=0\ \ \ \hbox{in}\ A^{8}_{}(X\times X)\ .\]
    \end{nonumbering}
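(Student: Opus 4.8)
The plan is to exploit a special feature of this family: because $Y$ has an equation with separated variables, it is a cyclic triple cover of $\PP^2\times\PP^2$ branched along $(C_f\times\PP^2)\cup(\PP^2\times C_g)$, where $C_f=\{f=0\}$ and $C_g=\{g=0\}$ are smooth plane cubics, hence elliptic curves. A Shioda-type argument applied to this presentation should give an isomorphism of Chow motives identifying the transcendental motive $\mathfrak h^4_{\mathrm{tr}}(Y)=:\mathfrak t(Y)$ with a direct summand of $\mathfrak h^1(C_f)\otimes\mathfrak h^1(C_g)(-1)$; in particular $Y$, and hence $X=F(Y)$, has finite--dimensional motive. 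I would regard this identification as the geometric heart of the argument, reducing everything else to formal manipulations with Chow motives.

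Next I would set up the multiplicative Chow--K\"unneth formalism. By \cite{SV}, $X=F(Y)$ carries a multiplicative Chow--K\"unneth decomposition $\mathfrak h(X)=\bigoplus_{m=0}^{8}\mathfrak h^m(X)$, which defines the bigrading $A^\ast_{(\ast)}(X)$; here $A^4(X)=A^4_{(0)}(X)\oplus A^4_{(2)}(X)\oplus A^4_{(4)}(X)$ with $A^4_{(j)}(X)=(\pi^{8-j}_X)_\ast A^4(X)$, and the case $j=0$ is trivial since $A^4_{(0)}(X)=\QQ\,\oo_X$ is one-dimensional. For $j\in\{2,4\}$ the K\"unneth projectors commute with the exchange involution $\tau$ of $X\times X$, so for $a,a'\in A^4_{(j)}(X)$ one has $a\times a'-a'\times a=(1-\tau_\ast)(a\times a')\in\operatorname{Hom}_{\MM_{\mathrm{rat}}}\!\bigl(\mathbf 1,(\wedge^2\mathfrak h^{8-j}(X))(8)\bigr)$, a direct summand of $A^8(X\times X)$; the theorem thus reduces to the vanishing of this $\operatorname{Hom}$-group. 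To describe $\mathfrak h^6(X)$ and $\mathfrak h^4(X)$ I would invoke the Beauville--Donagi isomorphism in Chow-theoretic form (cf.\ \cite{SV}), namely $\mathfrak h^2_{\mathrm{tr}}(X)\cong\mathfrak t(Y)(1)$, exhibiting $\mathfrak h^2_{\mathrm{tr}}(X)$ as a direct summand of $A\otimes B$ with $A:=\mathfrak h^1(C_f)$, $B:=\mathfrak h^1(C_g)$ of rank $2$; together with the $K3^{[2]}$-type identities $\mathfrak h^4(X)\cong\operatorname{Sym}^2\mathfrak h^2(X)$ and $\mathfrak h^6(X)\cong\mathfrak h^2(X)(-2)$, writing $\mathfrak h^2(X)=\mathfrak h^2_{\mathrm{tr}}(X)\oplus\LLL^{\oplus\rho}$ shows that $\mathfrak h^4(X)$ is a direct summand of $\operatorname{Sym}^2(A\otimes B)\oplus\bigl((A\otimes B)(-1)\bigr)^{\oplus\rho}\oplus L_4$, and $\mathfrak h^6(X)\cong(A\otimes B)(-2)\oplus L_6$, where $L_4,L_6$ are direct sums of Lefschetz motives.

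The remaining step is an elementary computation with Schur functors of rank-two objects. Since $A$ and $B$ have rank $2$ there are isomorphisms of Chow motives $\wedge^2A\cong\wedge^2B\cong\LLL$ and $\wedge^2\operatorname{Sym}^2A\cong\operatorname{Sym}^2A\otimes\LLL$ (likewise for $B$); these, together with $\wedge^2(U\otimes V)\cong(\wedge^2U\otimes\operatorname{Sym}^2V)\oplus(\operatorname{Sym}^2U\otimes\wedge^2V)$ and the additivity of $\wedge^2$, should imply that $\wedge^2\mathfrak h^{8-j}(X)$ ($j=2,4$) is isomorphic to a finite direct sum $\bigoplus_iN_i(-e_i)$ with each $e_i\ge 1$ and each $N_i$ an effective Chow motive that is a direct summand of $\mathfrak h(W_i)$ for some $W_i$ a product of copies of $C_f$ and $C_g$ satisfying $e_i+\dim W_i\le 7$. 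Then every summand of $\operatorname{Hom}_{\MM_{\mathrm{rat}}}\!\bigl(\mathbf 1,(\wedge^2\mathfrak h^{8-j}(X))(8)\bigr)$ has the shape $\operatorname{Hom}_{\MM_{\mathrm{rat}}}(\mathbf 1,N_i(8-e_i))$, a direct summand of $A^{8-e_i}(W_i)$, and this vanishes because $8-e_i>\dim W_i$. This gives $a\times a'-a'\times a=0$ for $j=2,4$, and hence, together with the case $j=0$, for all $j$.

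The hard part will be the passage from cohomology to \emph{Chow} motives. The Shioda-type description of $\mathfrak t(Y)$, the Beauville--Donagi isomorphism, and the $K3^{[2]}$-type identities $\mathfrak h^4(X)\cong\operatorname{Sym}^2\mathfrak h^2(X)$, $\mathfrak h^6(X)\cong\mathfrak h^2(X)(-2)$ are classical as statements about Hodge structures, but must here be upgraded to isomorphisms of Chow motives with control of the direct-summand decomposition; this is exactly where finite-dimensionality of the motive of $Y$ (hence of $F(Y)$) is used. Once these identifications are secured, the multiplicative Chow--K\"unneth reduction and the exterior-square computation are essentially formal.
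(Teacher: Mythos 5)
Your proposal is correct in outline, but it follows a genuinely different route from the paper. The paper never works with the motives $h^4(X)$ and $h^6(X)$ directly: it uses the symplectic automorphism of $X$ induced by $[X_0:\cdots:X_5]\mapsto[X_0:X_1:X_2:\omega X_3:\omega X_4:\omega X_5]$, whose fixed locus is the abelian surface $B=E\times E^\prime$, to obtain (via finite--dimensionality, itself proved by the same Shioda inductive structure you invoke) an isomorphism of Chow motives between $(X,\pi^X_2,0)$ (up to Lefschetz summands) and the transcendental motive of $B$, hence $A^2_{(2)}(X)\cong A^2_{(2)}(B)$; the antisymmetry for products of classes in $A^2_{(2)}(X)$ is then imported from Voisin's theorem on $0$--cycles of abelian varieties \cite[Example 4.40]{Vo}, and the passage to $0$--cycles is done with Shen--Vial's Chow--ring results (the isomorphism $\cdot\ell\colon A^2_{(2)}(X)\to A^4_{(2)}(X)$ and the surjectivity of $A^2_{(2)}(X)\otimes A^2_{(2)}(X)\to A^4_{(4)}(X)$, theorem \ref{chowringfano}). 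You instead stay at the level of motives, replacing Shen--Vial's ring-theoretic input by Chow--level versions of Beauville--Donagi and of the $K3^{[2]}$ identities $h^4(X)\cong\operatorname{Sym}^2h^2(X)$, $h^6(X)\cong h^2(X)(-2)$, and replacing Voisin's abelian--variety theorem by your Schur--functor computation. That computation does work: since $\wedge^2(U\otimes V)=(\wedge^2U\otimes\operatorname{Sym}^2V)\oplus(\operatorname{Sym}^2U\otimes\wedge^2V)$ always produces a determinant of a rank--two factor, the only potentially untwisted weight--$8$ constituent $\operatorname{Sym}^4\mathfrak h^1(C_f)\otimes\operatorname{Sym}^4\mathfrak h^1(C_g)$ lands in the symmetric, not the alternating, square, and every constituent of $\wedge^2h^{8-j}(X)$ indeed satisfies your inequality $e_i+\dim W_i\le 7$; morally this dimension count is the same mechanism that underlies Voisin's theorem for abelian varieties, and both proofs ultimately rest on identifying $h^2_{tr}(X)$ with an abelian-type motive built from the two plane cubics.

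Two caveats. First, \cite{SV} does not provide an MCK decomposition for an arbitrary smooth cubic fourfold of this family (cf.\ remark \ref{pity}); you only actually use the CK decomposition and external products, so this is harmless, but it should not be asserted. Second, and more substantively, finite--dimensionality alone does not upgrade a Hodge--theoretic isomorphism to one of Chow motives: you must first establish the isomorphisms at the level of \emph{homological} motives, i.e.\ show that the inverses of the cycle--induced maps (the incidence correspondence for Beauville--Donagi, cup product via the small diagonal for $\operatorname{Sym}^2h^2\to h^4$, hard Lefschetz for $h^2\to h^6$, and the Shioda correspondence identifying the transcendental motive of $Y$ inside $\mathfrak h^1(C_f)\otimes \mathfrak h^1(C_g)(-1)$) are themselves algebraic. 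This requires an extra input not mentioned in your sketch: either the Lefschetz standard conjecture for $X$ (known for hyperk\"ahler varieties of $K3^{[2]}$ deformation type by work of Charles and Markman), or the observation that in this family the motive of $Y$, hence of $X=F(Y)$ by \cite{fano}, is of abelian type, so that homological and numerical equivalence agree and semisimplicity furnishes the inverses; only then does Kimura finite--dimensionality lift everything to $\MM_{\rm rat}$. With that supplement your argument goes through; the paper's route is lighter precisely because it delegates all of this multiplicative and motivic structure to Shen--Vial's Chow--ring computations and to Voisin's theorem.
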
 
    
    Here the notation $A^\ast_{(\ast)}(X)$ refers to the Fourier decomposition of the Chow ring constructed by Shen--Vial \cite{SV}. (We mention in passing that for $X$ as in theorem \ref{main}, it is not yet known whether the pieces $A^\ast_{(\ast)}(X)$ fit together to form a bigraded ring, cf. remark \ref{pity}.) 
    
    The proof of theorem \ref{main} relies on the theory of finite--dimensional motives \cite{Kim}.
    
   As a corollary to theorem \ref{main}, we find that a certain instance of the generalized Hodge conjecture is verified on $X\times X$: 
   
   \begin{nonumberingc}[=corollary \ref{cor}] Let $X$ be as in theorem \ref{main}. The Hodge sub--structure
     \[ \wedge^2 H^4(X,\QQ)\ \subset\ H^8(X\times X,\QQ) \]
     is supported on a divisor.
   \end{nonumberingc}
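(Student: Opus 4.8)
The plan is to realise $\wedge^2H^4(X,\QQ)$ as the cohomology of a Chow submotive of $h(X\times X)$ whose group of zero-cycles vanishes, and then to run a Bloch--Srinivas ``decomposition of the diagonal'' argument on the projector cutting it out. Write $W:=X\times X$. Let $\pi_4\in A^4(X\times X)$ be the degree-$4$ Chow--K\"unneth projector underlying the (self-dual, weakly multiplicative) Fourier decomposition of Shen--Vial, so that $(\pi_4)_\ast A^4(X)=A^4_{(4)}(X)$, and let $\iota\colon W\to W$ be the involution exchanging the two copies of $X$. I would set
\[
 p:=\tfrac12\bigl(\Delta_W-\Gamma_\iota\bigr)\circ(\pi_4\otimes\pi_4)\ \in\ A^8(W\times W)\ ,
\]
the exterior product $\pi_4\otimes\pi_4$ being regarded as a self-correspondence of $W$. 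Because $\pi_4\otimes\pi_4$ is idempotent and commutes with $\Gamma_\iota$, the cycle $p$ is idempotent; because ${}^t\pi_4=\pi_4$ (self-duality of the Shen--Vial decomposition), ${}^t\Gamma_\iota=\Gamma_\iota$ and ${}^t\Delta_W=\Delta_W$, the cycle $p$ is in addition \emph{symmetric}: ${}^tp=p$. One checks that $M:=(W,p)$ is a direct summand of $h(W)$ with $H^\ast(M)=\wedge^2H^4(X)$, placed in degree $8$ and sitting inside $H^8(X\times X)$ through the K\"unneth embedding $\wedge^2H^4(X)\subset H^4(X)\otimes H^4(X)$.

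The first step is to extract $A_0(M)=0$ from theorem \ref{main}. The group $A_0(W)=A^8(X\times X)$ is generated by classes of closed points $[x]\times[x']$ with $x,x'\in X(\C)$, and
\[
 p_\ast\bigl([x]\times[x']\bigr)=\tfrac12\bigl((\pi_4)_\ast[x]\times(\pi_4)_\ast[x']-(\pi_4)_\ast[x']\times(\pi_4)_\ast[x]\bigr)\ ,
\]
where $(\pi_4)_\ast[x],(\pi_4)_\ast[x']\in A^4_{(4)}(X)$. By theorem \ref{main} applied with $j=4$, the right-hand side vanishes; hence $p_\ast A_0(W)=0$, i.e. $A_0(M)=0$.

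The second step is the Bloch--Srinivas mechanism. The identity $p_\ast A_0(W)=0$ says that the zero-cycle $p|_{\{x\}\times W}$ is rationally trivial on $W$ for every closed point $x\in W$; by the standard spreading-out/Lefschetz-principle argument this forces the restriction of $p$ to $\{\eta\}\times W$ ($\eta$ the generic point of the first factor) to be torsion, hence zero with $\QQ$-coefficients, so by localization $p$ is rationally equivalent to a cycle supported on $D\times W$ for some divisor $D\subsetneq W$. Transposing and using ${}^tp=p$, the idempotent $p$ is likewise rationally equivalent to a cycle supported on $W\times D$. Choosing a resolution $\widetilde D\to D$, this latter cycle factors the correspondence $p$ through $h(\widetilde D)$ in such a way that $[p]_\ast\colon H^8(W)\to H^8(W)$ decomposes as $H^8(W)\to H^6(\widetilde D)\xrightarrow{\ \mathrm{Gysin}\ }H^8(W)$. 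Therefore $\wedge^2H^4(X)=\operatorname{Im}\bigl([p]_\ast\bigr)$ is contained in the image of $H^6(\widetilde D)\to H^8(X\times X)$, which is precisely the assertion that $\wedge^2H^4(X)$ is supported on the divisor $D$. (Should one wish to avoid invoking symmetry of $\pi_4$, finite-dimensionality of $h(X\times X)$ gives an alternative: since $[{}^tp]=[p]$ in cohomology, the motives $(W,p)$ and $(W,{}^tp)$ are then isomorphic, so $A_0((W,{}^tp))=0$ too, which is what is needed to place the support in the correct factor.)

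The step requiring the most care is exactly this passage from the pointwise vanishing of $p_\ast$ over $\C$ to a genuine support statement for the cycle $p$, and in particular obtaining support in the factor $W\times D$ rather than merely in $D\times W$; this is where the symmetry of the Shen--Vial projector (equivalently, the finite-dimensionality of the motive, which is in force here in any case) plays an essential role. The remaining ingredients --- that $A_0(X\times X)$ is generated by products of points, that $p$ is a symmetric idempotent cutting out $\wedge^2h^4(X)$, and that a cycle supported on $W\times D$ factors the associated correspondence through $h(\widetilde D)$ --- are standard.
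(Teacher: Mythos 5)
Your proposal is correct and follows essentially the same route as the paper: the same correspondence $(\Delta_{X\times X}-\Gamma_\iota)\circ(\pi^X_4\times\pi^X_4)$ (up to the harmless factor $\tfrac12$), the vanishing of its action on zero--cycles via the $j=4$ case of theorem \ref{main}, and then the Bloch--Srinivas argument to get support on a divisor and conclude via the Gysin map from a resolution. The only difference is that you spell out the passage from support on $D\times W$ to support on $W\times D$ (via self--duality ${}^t\pi^X_4=\pi^X_4$, or alternatively finite--dimensionality), a point the paper's proof passes over silently; this is a welcome precision rather than a new method.
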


 \vskip0.6cm

\begin{convention} In this article, the word {\sl variety\/} will refer to a reduced irreducible scheme of finite type over $\C$. A {\sl subvariety\/} is a (possibly reducible) reduced subscheme which is equidimensional. 

{\bf All Chow groups will be with rational coefficients}: we will denote by $A_j(X)$ the Chow group of $j$--dimensional cycles on $X$ with $\QQ$--coefficients; for $X$ smooth of dimension $n$ the notations $A_j(X)$ and $A^{n-j}(X)$ are used interchangeably. 

The notations $A^j_{hom}(X)$, $A^j_{AJ}(X)$ will be used to indicate the subgroups of homologically trivial, resp. Abel--Jacobi trivial cycles.
For a morphism $f\colon X\to Y$, we will write $\Gamma_f\in A_\ast(X\times Y)$ for the graph of $f$.
The contravariant category of Chow motives (i.e., pure motives with respect to rational equivalence as in \cite{Sc}, \cite{MNP}) will be denoted $\MM_{\rm rat}$.



We will write $H^j(X)$ 
to indicate singular cohomology $H^j(X,\QQ)$.

\end{convention}

\section{Preliminaries}

\subsection{Finite--dimensional motives}

We refer to \cite{Kim}, \cite{An}, \cite{Iv}, \cite{J4}, \cite{MNP} for the definition of finite--dimensional motive. 
An essential property of varieties with finite--dimensional motive is embodied by the nilpotence theorem:

\begin{theorem}[Kimura \cite{Kim}]\label{nilp} Let $X$ be a smooth projective variety of dimension $n$ with finite--dimensional motive. Let $\Gamma\in A^n(X\times X)_{}$ be a correspondence which is numerically trivial. Then there is $N\in\NN$ such that
     \[ \Gamma^{\circ N}=0\ \ \ \ \in A^n(X\times X)_{}\ .\]
\end{theorem}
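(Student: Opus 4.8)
The plan is to derive nilpotence of a numerically trivial $\Gamma$ from the defining feature of finite dimensionality---the vanishing of a high exterior, respectively symmetric, power---by means of a categorical Cayley--Hamilton identity, once numerical triviality has been translated into the vanishing of all ``trace powers''. Write $\mathfrak h(X)=(X,\Delta_X,0)$ for the Chow motive of $X$, so that $\Gamma\in A^n(X\times X)$ is an endomorphism of $\mathfrak h(X)$ in the $\QQ$--linear rigid tensor category $\MM_{\rm rat}$. Finite dimensionality provides a splitting $\mathfrak h(X)=M_+\oplus M_-$ in which $M_+$ is evenly finite--dimensional, say $\wedge^{N}M_+=0$, and $M_-$ is oddly finite--dimensional, say $\operatorname{Sym}^{N}M_-=0$. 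I would first note that the numerically trivial endomorphisms form a two--sided ideal of $\operatorname{End}(\mathfrak h(X))$ compatible with this splitting, so that it suffices to establish nilpotence on each summand and then reassemble.

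The crucial translation is that the categorical trace coincides with the numerical intersection pairing: for degree--$0$ self--correspondences $f,g$ one has $\operatorname{tr}(f\circ g)=\langle f,{}^t g\rangle$, the intersection number on $X\times X$. Numerical triviality of $\Gamma$ thus reads $\operatorname{tr}(\Gamma\circ g)=0$ for every $g$; taking $g=\Gamma^{\circ(k-1)}$ gives
\[ \operatorname{tr}(\Gamma^{\circ k})=0\qquad\text{for all }k\ge 1 . \]
As the projectors $\pi_\pm$ onto $M_\pm$ are themselves correspondences and the categorical trace passes to direct summands, the restriction $f:=\pi_+\Gamma\pi_+$ is a numerically trivial endomorphism of $M_+$ all of whose power traces $p_k:=\operatorname{tr}(f^{\circ k})$ vanish, and likewise on $M_-$.

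Now I would feed this into two formal ingredients valid in any $\QQ$--linear rigid tensor category. First, Newton's identities (usable because we work in characteristic $0$) express the elementary symmetric traces $e_k:=\operatorname{tr}(\wedge^k f)$ recursively through $p_1,\dots,p_k$; vanishing of every $p_k$ forces $e_k=0$ for all $k\ge 1$. Second, the Cayley--Hamilton identity attached to the relation $\wedge^{N}M_+=0$ states that
\[ \sum_{i=0}^{N}(-1)^i\,\operatorname{tr}(\wedge^i f)\,f^{\circ(N-i)}=0 , \]
with $\operatorname{tr}(\wedge^0 f)=1$ and $\operatorname{tr}(\wedge^{N}f)=0$. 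Substituting $e_1=\dots=e_{N}=0$ collapses this to $f^{\circ N}=0$, so $\Gamma$ is nilpotent on $M_+$. The summand $M_-$ is handled identically after exchanging the roles of $\wedge^\bullet$ and $\operatorname{Sym}^\bullet$: the sign in the symmetry constraint on an oddly finite--dimensional object turns symmetric powers into the relevant ``exterior'' powers, Newton's identities now link the $p_k$ to the traces $\operatorname{tr}(\operatorname{Sym}^k f)$, and the corresponding Cayley--Hamilton relation attached to $\operatorname{Sym}^{N}M_-=0$ yields nilpotence on $M_-$.

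It remains to reassemble. Writing $\Gamma$ as a matrix relative to $M_+\oplus M_-$, the diagonal blocks are nilpotent by the above, while the off--diagonal blocks lie in $\operatorname{Hom}(M_+,M_-)$ and $\operatorname{Hom}(M_-,M_+)$, which consist entirely of numerically trivial morphisms (a morphism between an evenly and an oddly finite--dimensional motive is automatically numerically trivial); hence their composites are numerically trivial, and therefore nilpotent, endomorphisms of $M_-$ and $M_+$. A short matrix computation then promotes block--wise nilpotence to nilpotence of $\Gamma$ itself, producing a single $N\in\NN$ with $\Gamma^{\circ N}=0$. The main obstacle is the correct formulation and proof of the categorical Cayley--Hamilton identity, together with the sign bookkeeping needed to transfer it to the oddly finite--dimensional summand; by contrast, the passage from numerical triviality to vanishing trace powers and the Newton's--identities step are routine once the trace formalism is set up.
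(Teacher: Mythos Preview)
The paper does not supply a proof of this statement: Theorem~\ref{nilp} is simply quoted from Kimura \cite{Kim} as background, with no argument given. So there is no ``paper's own proof'' to compare against.

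That said, your outline is essentially the standard proof (as in \cite{Kim}, with later streamlining by Andr\'e--Kahn and Jannsen \cite{J4}): split $\mathfrak h(X)=M_+\oplus M_-$, use the identification of the categorical trace with the intersection pairing to convert numerical triviality into $\operatorname{tr}(f^{\circ k})=0$ for all $k$, feed this through Newton's identities and a categorical Cayley--Hamilton relation to obtain nilpotence on each summand, and then reassemble. Your justification that $\operatorname{Hom}(M_+,M_-)$ consists of numerically trivial morphisms is correct (it uses Jannsen's semisimplicity together with the fact that a nonzero simple object cannot be simultaneously even and odd).

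One point worth tightening: the ``short matrix computation'' in the reassembly is not quite as short as you suggest. Knowing only that the diagonal blocks $a,d$ and the products $bc,cb$ are individually nilpotent does not formally imply that $\Gamma=\begin{pmatrix} a & b\\ c & d\end{pmatrix}$ is nilpotent, since sums of nilpotents in a noncommutative ring need not be nilpotent. What saves you is that Cayley--Hamilton yields a \emph{uniform} bound (depending only on $\dim M_\pm$, not on the endomorphism), so the numerically trivial endomorphisms of $M_+$ and of $M_-$ form \emph{nilpotent ideals} (e.g.\ via Nagata--Higman in characteristic~$0$). With this in hand, a path-counting argument on words in $a,b,c,d$ shows that every sufficiently long product already vanishes, giving nilpotence of $\Gamma$. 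You should make this uniformity explicit.
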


 Actually, the nilpotence property (for all powers of $X$) could serve as an alternative definition of finite--dimensional motive, as shown by Jannsen \cite[Corollary 3.9]{J4}.
Conjecturally, any variety has finite--dimensional motive \cite{Kim}. We are still far from knowing this, but at least there are quite a few non--trivial examples.

\subsection{MCK decomposition}

\begin{definition}[Murre \cite{Mur}] Let $X$ be a projective quotient variety of dimension $n$. We say that $X$ has a {\em CK decomposition\/} if there exists a decomposition of the diagonal
   \[ \Delta_X= \pi_0+ \pi_1+\cdots +\pi_{2n}\ \ \ \hbox{in}\ A^n(X\times X)\ ,\]
  such that the $\pi_i$ are mutually orthogonal idempotents and $(\pi_i)_\ast H^\ast(X)= H^i(X)$.
  
  (NB: ``CK decomposition'' is shorthand for ``Chow--K\"unneth decomposition''.)
\end{definition}

\begin{remark}
\normalfont The existence of a CK decomposition for any smooth projective variety is part of Murre's conjectures \cite{Mur}, \cite{J2}. 
\end{remark}

\begin{definition}[Shen--Vial \cite{SV}]
\normalfont Let $X$ be a projective quotient variety of dimension $n$. Let $\Delta_X^{sm}\in A^{2n}(X\times X\times X)$ be the class of the small diagonal
  \[ \Delta_X^{sm}:=\bigl\{ (x,x,x)\ \vert\ x\in X\bigr\}\ \subset\ X\times X\times X\ .\]
  An {\em MCK decomposition\/} is a CK decomposition $\{\pi^X_i\}$ of $X$ that is {\em multiplicative\/}, i.e. it satisfies
  \[ \pi^X_k\circ \Delta_X^{sm}\circ (\pi^X_i\times \pi^X_j)=0\ \ \ \hbox{in}\ A^{2n}(X\times X\times X)\ \ \ \hbox{for\ all\ }i+j\not=k\ .\]
  
 (NB: ``MCK decomposition'' is shorthand for ``multiplicative Chow--K\"unneth decomposition''.) 
  
 A {\em weak MCK decomposition\/} is a CK decomposition $\{\pi^X_i\}$ of $X$ that satisfies
    \[ \Bigl(\pi^X_k\circ \Delta_X^{sm}\circ (\pi^X_i\times \pi^X_j)\Bigr){}_\ast (a\times b)=0 \ \ \ \hbox{for\ all\ } a,b\in\ A^\ast(X)\ .\]
  \end{definition}
  
  \begin{remark}
  \normalfont The small diagonal (seen as a correspondence from $X\times X$ to $X$) induces the {\em multiplication morphism\/}
    \[ \Delta_X^{sm}\colon\ \  h(X)\otimes h(X)\ \to\ h(X)\ \ \ \hbox{in}\ \MM_{\rm rat}\ .\]
 Suppose $X$ has a CK decomposition
  \[ h(X)=\bigoplus_{i=0}^{2n} h^i(X)\ \ \ \hbox{in}\ \MM_{\rm rat}\ .\]
  By definition, this decomposition is multiplicative if for any $i,j$ the composition
  \[ h^i(X)\otimes h^j(X)\ \to\ h(X)\otimes h(X)\ \xrightarrow{\Delta_X^{sm}}\ h(X)\ \ \ \hbox{in}\ \MM_{\rm rat}\]
  factors through $h^{i+j}(X)$.
  
  If $X$ has a weak MCK decomposition, then setting
    \[ A^i_{(j)}(X):= (\pi^X_{2i-j})_\ast A^i(X) \ ,\]
    one obtains a bigraded ring structure on the Chow ring: that is, the intersection product sends $A^i_{(j)}(X)\otimes A^{i^\prime}_{(j^\prime)}(X) $ to  $A^{i+i^\prime}_{(j+j^\prime)}(X)$.
    
      It is expected (but not proven !) that for any $X$ with a weak MCK decomposition, one has
    \[ A^i_{(j)}(X)\stackrel{??}{=}0\ \ \ \hbox{for}\ j<0\ ,\ \ \ A^i_{(0)}(X)\cap A^i_{hom}(X)\stackrel{??}{=}0\ ;\]
    this is related to Murre's conjectures B and D, that have been formulated for any CK decomposition \cite{Mur}.

  The property of having an MCK decomposition is severely restrictive, and is closely related to Beauville's ``(weak) splitting property'' \cite{Beau3}. For more ample discussion, and examples of varieties with an MCK decomposition, we refer to \cite[Section 8]{SV}, as well as \cite{V6}, \cite{SV2}, \cite{FTV}.
    \end{remark}

In what follows, we will make use of the following: 

\begin{theorem}[Shen--Vial \cite{SV}]\label{fanomck} Let $Y\subset\PP^5(\C)$ be a smooth cubic fourfold, and let $X:=F(Y)$ be the Fano variety of lines in $Y$. There exists a CK decomposition $\{\pi^X_i\}$ for $X$, and 
  \[ (\pi^X_{2i-j})_\ast A^i(X) = A^i_{(j)}(X)\ ,\]
  where the right--hand side denotes the splitting of the Chow groups defined in terms of the Fourier transform as in \cite[Theorem 2]{SV}. Moreover, we have
  \[ A^i_{(j)}(X)=0\ \ \ \hbox{for\ }j<0\ \hbox{and\ for\ }j>i\ .\]
  
  In case $Y$ is very general, the Fourier decomposition $A^\ast_{(\ast)}(X)$ forms a bigraded ring, and hence
  $\{\pi^X_i\}$ is a weak MCK decomposition.
    \end{theorem}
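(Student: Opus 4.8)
The plan is to realize the decomposition through a Fourier-type transform on $A^\ast(X)$, in the spirit of Beauville's decomposition for abelian varieties, with the enumerative geometry of lines on $Y$ supplying the required kernel. Recall that $X=F(Y)$ is a hyperk\"ahler fourfold of $K3^{[2]}$-type, which is what makes such an analogue plausible.

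First I would set up the tautological geometry. On $X$ sits the restriction $S$ of the rank-two tautological subbundle of the Grassmannian $G(2,6)$; write $g:=c_1(S^\vee)\in A^1(X)$ for the Pl\"ucker polarization and $c:=c_2(S^\vee)\in A^2(X)$. The key correspondence is the incidence class: let $I\subset X\times X$ be the reduced locus of pairs $(\ell,\ell')$ of \emph{intersecting} lines, and define $L\in A^2(X\times X)$ as the symmetric normalization of $[I]$ obtained by subtracting universal polynomial correction terms in the pullbacks $g_1,g_2,c_1,c_2$. The crux of the whole argument --- and the step I expect to be the main obstacle --- is to prove an explicit quadratic relation
\[ L^2 = a\,\Delta_X + L\cdot P(g_1,g_2,c_1,c_2)+Q(g_1,g_2,c_1,c_2)\quad\hbox{in}\ A^4(X\times X)\ , \]
with $a\neq 0$ a universal constant and $P,Q$ explicit universal polynomials (here $L^2$ denotes the intersection self-product). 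Establishing this requires a detailed study of the geometry of lines on a cubic fourfold: one computes the self-intersection of $L$ by analyzing the variety of lines meeting two given lines, isolates the excess-intersection contributions, and reduces all correction terms to the tautological classes $g$ and $c$. This long intersection-theoretic computation, valid for every smooth cubic fourfold, is where essentially all of the content lies.

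Granting the quadratic relation, I would introduce the Fourier transform with kernel $\exp(L)=\sum_{i=0}^{4} L^i/i!$ (intersection-product powers, a finite sum since $\dim(X\times X)=8$), acting by $\alpha\mapsto (p_2)_\ast\bigl(\exp(L)\cdot (p_1)^\ast\alpha\bigr)$. The relation above is precisely what is needed to establish a Fourier inversion formula, exactly as for abelian varieties. From inversion one obtains a well-defined Fourier bigrading: one sets $A^i_{(j)}(X)$ to be the subspace whose transform lives in a single prescribed codimension, the index $j$ recording the shift, and inversion guarantees $A^i(X)=\bigoplus_j A^i_{(j)}(X)$; this is the decomposition of \cite[Theorem 2]{SV}. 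Next I would manufacture the Chow--K\"unneth projectors: out of $L$, $g$ and $c$ one writes down explicit correspondences $\pi^X_i\in A^4(X\times X)$, the quadratic relation guaranteeing that they are mutually orthogonal idempotents, and a computation in cohomology --- where the action of $L$ is governed by the Beauville--Bogomolov form --- shows $(\pi^X_i)_\ast H^\ast(X)=H^i(X)$, so $\{\pi^X_i\}$ is a genuine CK decomposition. By construction these projectors are adapted to the eigenspaces, yielding the asserted identity $(\pi^X_{2i-j})_\ast A^i(X)=A^i_{(j)}(X)$. For the vanishing, $A^i_{(j)}(X)=0$ when $j<0$ is immediate from the normalization of the grading, while the bound $j\le i$ requires more work, combining the local nilpotence of $L$ with the explicit low-codimension structure of $A^\ast(X)$ (the groups $A^0(X)$, $A^1(X)$ carry only a $(0)$-piece, and $X$ being a fourfold caps the number of nonzero graded pieces) so that the Fourier degree cannot exceed the codimension.

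Finally, for $Y$ very general I would establish multiplicativity, i.e. $A^i_{(j)}(X)\cdot A^{i'}_{(j')}(X)\subset A^{i+i'}_{(j+j')}(X)$, equivalently the weak MCK relation $\bigl(\pi^X_k\circ\Delta_X^{sm}\circ(\pi^X_i\times\pi^X_j)\bigr)_\ast(a\times b)=0$ for $i+j\neq k$. The strategy is to express the small diagonal $\Delta_X^{sm}$ in terms of $L$ and the tautological classes and to check that the resulting formula respects the Fourier grading. The terms that would obstruct multiplicativity lie in pieces of $A^\ast(X\times X)$ which, under the very general hypothesis, are forced to vanish by a Voisin-type spreading/specialization argument (they would be simultaneously transcendental and of the wrong Hodge type). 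This is precisely why the bigraded ring structure can only be asserted for very general $Y$, the analogous vanishing being unknown for the special cubics relevant to theorem \ref{main} (cf. remark \ref{pity}).
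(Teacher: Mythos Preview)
Your sketch is a reasonable outline of the Shen--Vial argument itself, but note that in the paper this theorem is a \emph{cited} result: the paper's ``proof'' consists entirely of pointers to \cite{SV} (specifically \cite[Theorem~3.3]{SV} for the CK decomposition and its compatibility with the Fourier splitting, together with \cite[Theorem~3]{SV} for the bigraded ring statement in the very general case). So there is no independent argument to compare against --- the paper simply records where in \cite{SV} each assertion is established.

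That said, one point in your outline is slightly off. You describe the vanishing $A^i_{(j)}(X)=0$ for $j<0$ as ``immediate from the normalization of the grading'' and only $j>i$ as requiring work. In fact both vanishings together are exactly Murre's conjecture~B for the projectors $\{\pi^X_i\}$ (namely, $\pi^X_k$ acts as zero on $A^i(X)$ for $k<i$ and for $k>2i$), and the paper explicitly attributes the ``moreover'' clause to the verification of Murre~B in \cite[Theorem~3.3]{SV}. Neither direction is formal; both are part of the content of the Shen--Vial result. Apart from this, your high-level description of the quadratic relation for $L$, the Fourier transform $e^L$, and the construction of the projectors is a fair summary of the strategy of \cite{SV} --- just be aware that the actual execution occupies a substantial memoir, and that for the purposes of this paper a citation is what is expected.
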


\begin{proof} (A remark on notation: what we denote $A^i_{(j)}(X)$ is denoted $CH^i(X)_j$ in \cite{SV}.)

The existence of a CK decomposition $\{\pi^X_i\}$ is \cite[Theorem 3.3]{SV}, combined with the results in \cite[Section 3]{SV} to ensure that the hypotheses of \cite[Theorem 3.3]{SV} are satisfied. According to \cite[Theorem 3.3]{SV}, the given CK decomposition agrees with the Fourier decomposition of the Chow groups. The ``moreover'' part is because the $\{\pi^X_i\}$ are shown to satisfy Murre's conjecture B \cite[Theorem 3.3]{SV}.

The statement for very general cubics is \cite[Theorem 3]{SV}.
    \end{proof}

\begin{remark}\label{pity}
\normalfont Unfortunately, it is not yet known that the Fourier decomposition of \cite{SV} induces a bigraded ring structure on the Chow ring for {\em all\/} Fano varieties of smooth cubic fourfolds. For one thing, it has not yet been proven that $A^2_{(0)}(X)\cdot A^2_{(0)}(X)\subset A^4_{(0)}(X)$ (cf. \cite[Section 22.3]{SV} for discussion).
\end{remark}



\subsection{Fano varieties of cubic fourfolds}

Let $X$ be the Fano variety of lines on a smooth cubic fourfold. As we have seen (theorem \ref{fanomck}), the Chow ring of $X$ splits into pieces $A^i_{(j)}(X)$.
The magnum opus \cite{SV} contains a detailed analysis of the multiplicative behaviour of these pieces. Here are the relevant results we will be needing:

\begin{theorem}[Shen--Vial \cite{SV}]\label{chowringfano} Let $Y\subset\PP^5(\C)$ be a smooth cubic fourfold, and let $X:=F(Y)$ be the Fano variety of lines in $Y$. 

\noindent
(\rom1) There exists $\ell\in A^2_{(0)}(X)$ such that intersecting with $\ell$ induces an isomorphism
  \[ \cdot\ell\colon\ \ \ A^2_{(2)}(X)\ \xrightarrow{\cong}\ A^4_{(2)}(X)\ .\]

\noindent
(\rom2) Intersection product induces a surjection
  \[ A^2_{(2)}(X)\otimes A^2_{(2)}(X)\ \twoheadrightarrow\ A^4_{(4)}(X)\ .\]
\end{theorem}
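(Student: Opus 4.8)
The plan is to derive both statements from the geometry of the incidence correspondence together with the Fourier-theoretic machinery underlying the decomposition $A^\ast_{(\ast)}(X)$. Write $g\in A^1(X)$ for the Pl\"ucker polarization, $c\in A^2(X)$ for the distinguished degree-two tautological class, and $L\in A^2(X\times X)$ for the incidence correspondence $L=\{(\ell,\ell')\ :\ \ell\cap\ell'\neq\emptyset\}$. The engine is Voisin's quadratic relation, which expresses $L^2$ in $A^4(X\times X)$ as a fixed polynomial in $\Delta_X$, $L$, and the pullbacks of $g$ and $c$ from the two factors; from this relation the Fourier kernel is built, and the pieces $A^i_{(j)}(X)$ arise as the simultaneous eigenspaces of the operators induced by $L$. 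I would take this framework, together with theorem \ref{fanomck} (in particular the vanishing $A^i_{(j)}(X)=0$ for $j<0$ and $j>i$), as the starting point.

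For (i), I would first fix $\ell$ to be the distinguished class of $A^2_{(0)}(X)$ occurring in the quadratic relation, a specific rational combination of $g^2$ and $c$ whose cohomology class is a nonzero multiple of $g^2$. Since $\ell\in A^2_{(0)}(X)$, the part of the multiplicative structure established in \cite{SV} shows that $\cdot\ell$ respects the grading, hence sends $A^2_{(2)}(X)$ into $A^4_{(2)}(X)$. To obtain an isomorphism I would construct an explicit inverse out of $L$: the Fourier kernel yields a correspondence carrying $A^4_{(2)}(X)$ back to $A^2_{(2)}(X)$, and substituting the quadratic relation into its composition with $\cdot\ell$ collapses every term except a multiple of the identity on $A^2_{(2)}(X)$. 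The eigenspace description forces this scalar to be nonzero, giving bijectivity in both directions. Conceptually this is the Chow-level avatar of the hard Lefschetz isomorphism $H^2(X)\xrightarrow{\cong}H^6(X)$ given by cup product with $g^2$, now realized on the transcendental Fourier pieces.

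For (ii), the multiplicativity of these particular pieces established in \cite{SV} gives $A^2_{(2)}(X)\cdot A^2_{(2)}(X)\subseteq A^4_{(4)}(X)$ (this inclusion, unlike the one for the $(0)$-pieces discussed in remark \ref{pity}, is available), so only surjectivity remains. At the level of cohomology the symmetric cup product $\mathrm{Sym}^2 H^2(X)\to H^4(X)$ surjects, for fourfolds of $K3^{[2]}$-type, onto the part of $H^4(X)$ corresponding to $A^4_{(4)}(X)$, so products of transcendental classes already fill the relevant Hodge piece. I would then lift this cohomological surjection to rational equivalence: using (i) to replace arbitrary classes by products involving the image of $\cdot\ell$, the remaining task is to control the cycles that are homologically trivial. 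Here I would invoke finite-dimensionality of the motive of $X=F(Y)$ and the nilpotence theorem (theorem \ref{nilp}), applied to the correspondence that is cohomologically the projector onto the product-generated subspace but is a priori known to agree with it only modulo homological equivalence.

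I expect the surjectivity in (ii) to be the main obstacle. The inclusion of products into $A^4_{(4)}(X)$ and the cohomological surjection are essentially formal, but upgrading ``the span of products has full image in cohomology'' to ``the span of products is all of $A^4_{(4)}(X)$'' requires killing an error correspondence that is numerically trivial yet possibly nonzero in the Chow group; this is exactly the point at which the nilpotence theorem, and hence finite-dimensionality, enters. Part (i), by contrast, should come out cleanly once Voisin's quadratic relation for $L$ is in hand, since there the inverse is produced by an explicit manipulation of correspondences rather than by a lifting argument.
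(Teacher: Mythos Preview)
The paper's own ``proof'' is a two-line citation: (\romannumeral1) is \cite[Theorem 4]{SV} and (\romannumeral2) is \cite[Proposition 20.3]{SV}. So the author treats this as a black box, whereas you attempt to sketch how the result is actually obtained. For (\romannumeral1) your outline --- Voisin's quadratic relation for $L^2$, eigenspace description, explicit inverse built from the Fourier kernel --- is broadly in line with how Shen--Vial argue.

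For (\romannumeral2), however, there is a genuine gap. The statement is asserted for the Fano variety of lines of \emph{any} smooth cubic fourfold $Y$, and in that generality finite-dimensionality of the Chow motive of $X=F(Y)$ is not known (in this paper it is only established, via proposition \ref{fanofindim}, for the very special cubics of theorem \ref{main}). Your lifting step --- upgrading cohomological surjectivity to Chow-level surjectivity by applying the nilpotence theorem to a numerically trivial error correspondence --- therefore cannot be carried out as stated. The argument in \cite[Proposition 20.3]{SV} is not of this motivic/nilpotence type at all: it is a direct geometric computation using the incidence correspondence $I$ and the rational self-map of $X$, showing (roughly) that an arbitrary element of $A^4_{(4)}(X)$ can be written as an intersection of classes in $A^2_{(2)}(X)$ via explicit identities in the Chow ring, with no appeal to finite-dimensionality. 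If you want to reconstruct the proof rather than cite it, that is the mechanism you need to replace your nilpotence argument with.
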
 
     
 \begin{proof} Statement (\rom1) is \cite[Theorem 4]{SV}. Statement (\rom2) is \cite[Proposition 20.3]{SV}.
  \end{proof}

\section{Main result}

\begin{theorem}\label{main}  Let $Y\subset\PP^5(\C)$ be a smooth cubic fourfold defined by an equation
    \[ f(X_0,X_1,X_2)+ g(X_3,X_4,X_5)=0\ ,\]
    where $f$ and $g$ define smooth plane curves. Let $X=F(Y)$ be the Fano variety of lines in $Y$. Then for any $a,a^\prime\in A^4_{(j)}(X)$, there is equality
     \[ a\times a^\prime -a^\prime\times a=0\ \ \ \hbox{in}\ A^{8}_{(2j)}(X\times X)\ .\]   
  \end{theorem}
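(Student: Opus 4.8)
The plan is to reduce the statement to a single claim about the group $A^2_{(2)}(X)$, and then to prove that claim by combining the special shape of the equation of $Y$ with finite--dimensionality of motives. (Observe that $a\times a^\prime$ and $a^\prime\times a$ automatically lie in $A^8_{(2j)}(X\times X)$ for the product Chow--K\"unneth decomposition, so the only content is that the difference vanishes in $A^8(X\times X)$.)

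\emph{Reductions.} By theorem \ref{fanomck} (and the vanishing $A^4_{(1)}(X)=A^4_{(3)}(X)=0$, which follows from finite--dimensionality of $X$, established below) only the values $j\in\{0,2,4\}$ are relevant. For $j=0$: for $a,a^\prime\in A^4_{(0)}(X)=A^4\bigl(h^8(X)\bigr)$ the difference $a\times a^\prime-a^\prime\times a$ lies in $A^8\bigl(\wedge^2 h^8(X)\bigr)$, which vanishes since $\wedge^2 h^8(X)=0$ ($h^8(X)\cong\LLL^{\otimes 4}$ having rank one). For $j=2$: theorem \ref{chowringfano}(\rom1) lets one write $a=\ell\cdot b$, $a^\prime=\ell\cdot b^\prime$ with $\ell\in A^2_{(0)}(X)$ and $b,b^\prime\in A^2_{(2)}(X)$, whence $a\times a^\prime-a^\prime\times a=(\ell\times\ell)\cdot\bigl(b\times b^\prime-b^\prime\times b\bigr)$. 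For $j=4$: theorem \ref{chowringfano}(\rom2) lets one write $a=\sum_i b_ic_i$, $a^\prime=\sum_k b^\prime_kc^\prime_k$ with all factors in $A^2_{(2)}(X)$, and since $(uv)\times(u^\prime v^\prime)=(u\times u^\prime)\cdot(v\times v^\prime)$ in $A^\ast(X\times X)$ one gets $a\times a^\prime-a^\prime\times a=\sum_{i,k}\bigl((b_i\times b^\prime_k)(c_i\times c^\prime_k)-(b^\prime_k\times b_i)(c^\prime_k\times c_i)\bigr)$. In every case it therefore suffices to prove
\[ (\star)\qquad b\times b^\prime=b^\prime\times b\ \ \hbox{in}\ A^4(X\times X)\qquad\hbox{for all }b,b^\prime\in A^2_{(2)}(X). \]
Since $A^2_{(2)}(X)=(\pi^X_2)_\ast A^2(X)=A^2\bigl(h^2(X)\bigr)$, the element $b\times b^\prime$ lies in $A^4\bigl(h^2(X)\otimes h^2(X)\bigr)$; antisymmetrizing (applying the projector $\tfrac12(\ide-\tau)$, with $\tau$ the exchange of the two factors) shows $b\times b^\prime-b^\prime\times b\in A^4\bigl(\wedge^2 h^2(X)\bigr)$. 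Hence $(\star)$ --- and so the theorem --- follows from the vanishing $A^4\bigl(\wedge^2 h^2(X)\bigr)=0$.

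\emph{The key input.} This is where the hypothesis on $Y$ is used. Write $h^2(X)=h^2_{\rm alg}(X)\oplus h^2_{\rm tr}(X)$, with $h^2_{\rm alg}(X)$ (the span of divisor classes) a direct sum of $\rho$ copies of $\LLL$ and $h^2_{\rm tr}(X)$ the transcendental complement. I would then show that $h^2_{\rm tr}(X)$ is a direct summand, in $\MM_{\rm rat}$, of $h^1(C_f)\otimes h^1(C_g)$, where $C_f=\{f=0\}$ and $C_g=\{g=0\}$ in $\PP^2$ are the two plane curves cut out by $f$ and $g$ (being \emph{smooth plane cubics}, these are elliptic curves, so $\wedge^2 h^1(C_f)=h^2(C_f)=\LLL$ --- a point used below). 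This identification rests on three ingredients: (i) the Abel--Jacobi/incidence correspondence, which gives an isomorphism $h^2(X)\cong h^4(Y)(1)$ in $\MM_{\rm rat}$ (Beauville--Donagi, cf.\ \cite{SV}), hence $h^2_{\rm tr}(X)\cong h^4_{\rm tr}(Y)(1)$; (ii) the ``inductive structure'' for a cubic fourfold with equation of separated variables, which identifies the transcendental part $h^4_{\rm tr}(Y)$ with a direct summand of $h^1(C_f)\otimes h^1(C_g)(-1)$ --- at the level of cohomology this is visible through the $3$--fold join $C_f\ast C_g\subset Y$; and (iii) finite--dimensionality, needed to turn the cohomological statement of (ii) into an honest splitting of Chow motives via the nilpotence theorem \ref{nilp}. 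For (iii) one uses that $C_f\times C_g$, a product of curves, has finite--dimensional motive, and that $X$ itself does: the latter follows from finite--dimensionality of $h(Y)$ (which the inductive structure expresses in terms of Tate motives and $h(C_f)$, $h(C_g)$) via the relation between $h(F(Y))$ and $h(Y)$ (cf.\ \cite{SV}). Consequently $h^2(X)$ is a direct summand, in $\MM_{\rm rat}$, of $\LLL^{\oplus\rho}\oplus\bigl(h^1(C_f)\otimes h^1(C_g)\bigr)$.

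\emph{Conclusion, granting the key input.} Then $\wedge^2 h^2(X)$ is a direct summand of $\wedge^2\bigl(\LLL^{\oplus\rho}\oplus h^1(C_f)\otimes h^1(C_g)\bigr)$, which is a direct sum of copies of $\LLL^{\otimes 2}$, of copies of $\bigl(h^1(C_f)\otimes h^1(C_g)\bigr)(-1)$, and of $\wedge^2\bigl(h^1(C_f)\otimes h^1(C_g)\bigr)$. Now $A^4(\LLL^{\otimes 2})=A^2(\hbox{pt})=0$; next, $A^4$ of $\bigl(h^1(C_f)\otimes h^1(C_g)\bigr)(-1)$ equals $A^3\bigl(h^1(C_f)\otimes h^1(C_g)\bigr)\subseteq A^3(C_f\times C_g)=0$ since $C_f\times C_g$ is a surface; finally, using $\wedge^2(A\otimes B)=(\hbox{Sym}^2A\otimes\wedge^2B)\oplus(\wedge^2A\otimes\hbox{Sym}^2B)$ together with $\wedge^2 h^1(C)=\LLL$,
\[ \wedge^2\bigl(h^1(C_f)\otimes h^1(C_g)\bigr)=\hbox{Sym}^2 h^1(C_f)(-1)\oplus\hbox{Sym}^2 h^1(C_g)(-1), \]
and $A^4\bigl(\hbox{Sym}^2 h^1(C)(-1)\bigr)=A^3\bigl(\hbox{Sym}^2 h^1(C)\bigr)\subseteq A^3\bigl(h^1(C)\otimes h^1(C)\bigr)\subseteq A^3(C\times C)=0$. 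Therefore $A^4\bigl(\wedge^2 h^2(X)\bigr)=0$, which proves $(\star)$ and hence the theorem.

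\emph{Where the difficulty lies.} The genuinely hard part is the key input: realizing $h^2_{\rm tr}(X)$ as an \emph{honest} direct summand (modulo rational equivalence, not merely modulo homological equivalence) of the motive of the product $C_f\times C_g$. The cohomological version --- through the join $C_f\ast C_g$, or the inductive structure for hypersurfaces of separated variables --- should be fairly standard, but upgrading it to rational equivalence forces one into the theory of finite--dimensional motives, and in particular one must first establish that $X$ has finite--dimensional motive, tracing this back along the Fano correspondence to the two elliptic curves. Once that is in place, everything else is formal manipulation of Chow--K\"unneth projectors and exterior squares.
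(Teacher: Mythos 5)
Your reduction to the single claim $(\star)$ about $A^2_{(2)}(X)$ coincides with the paper's: the same appeal to theorem \ref{chowringfano}(\rom1) and (\rom2) for $j=2,4$, the same (trivial) $j=0$ case, and the same route to finite--dimensionality of $X$ (Shioda--Katsura inductive structure \cite{KS} for $Y$, then \cite{fano}). From $(\star)$ on you genuinely diverge. The paper proves $(\star)$ by using the order--three symplectic automorphism $u_X$ induced by $[X_0:\cdots:X_5]\mapsto[X_0:X_1:X_2:\omega X_3:\omega X_4:\omega X_5]$, whose fixed locus is an abelian surface $B=E\times E'$ \cite{Beau2}; symplecticity gives $H^2_{tr}(X)\cong H^2_{tr}(B)$, finite--dimensionality upgrades this to an isomorphism of Chow motives $(X,\pi^X_2,0)\cong (B,\pi^B_{2,tr},0)\oplus\LLL^{\oplus\rho}$, hence $A^2_{(2)}(X)\cong A^2_{(2)}(B)$, and $(\star)$ is then pulled back from Voisin's theorem $c\times c'=(-1)^g\,c'\times c$ on abelian varieties \cite{Vo}. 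You instead deduce $(\star)$ from the vanishing $A^4(\wedge^2 h^2(X))=0$, after realizing $h^2_{tr}(X)$ as a Chow--motive summand of $h^1(C_f)\otimes h^1(C_g)$ via Beauville--Donagi plus the separated--variables inductive structure, upgraded by nilpotence. Your key input is true (it is equivalent to what the paper extracts from the fixed locus of $u_X$, since $E\cong C_g$, $E'\cong C_f$), and your strategy for it works: the blow--up of $Y_1\times Y_2$ along $C_f\times C_g$ dominates $Y$ and cubic surfaces have algebraic $H^2$, so $H^4_{tr}(Y)$ sits inside $H^1(C_f)\otimes H^1(C_g)(-1)$; note only the homological version of your step (i) is needed (an isomorphism $h^2(X)\cong h^4(Y)(1)$ in $\MM_{\rm rat}$ is not literally Beauville--Donagi), which is consistent since you invoke finite--dimensionality at the end anyway. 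What your route buys is a self--contained replacement of Voisin's abelian--variety theorem by the dimension count $A^3(\hbox{surface})=0$; what the paper's route buys is that the motivic identification comes for free from Beauville's automorphism rather than having to be built by hand.

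Two small repairs. With the usual Kimura convention (symmetric group acting on $M^{\otimes 2}$ without Koszul signs, which is the convention under which your antisymmetrization of $b\times b'-b'\times b$ is correct), one has $\hbox{Sym}^2h^1(C)\cong\LLL$ while $\wedge^2h^1(C)$ realizes to $\hbox{Sym}^2H^1(C)$; so the labels in your decomposition of $\wedge^2\bigl(h^1(C_f)\otimes h^1(C_g)\bigr)$ are interchanged. This is harmless: every summand is still $\LLL$ tensored with a summand of the motive of a product of two curves, and $A^3$ of a surface vanishes, so the conclusion stands. For $j=0$ it is quicker to quote $A^4_{(0)}(X)\cong\QQ$ (as the paper does) than to argue $h^8(X)\cong\LLL^{\otimes 4}$, though your argument also goes through using nilpotence.
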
     
 
 \begin{proof}    
  Let $u_Y\colon Y\to Y$ be the automorphism defined by the automorphism of $\PP^5(\C)$
    \[  [X_0\colon\cdots\colon X_5]\ \mapsto\ [X_0\colon X_1\colon X_2\colon \omega X_3\colon \omega X_4\colon \omega X_5]\ ,\]
    where $\omega$ is a primitive third root of unity. Let $u_X\in\aut(X)$ denote the automorphism induced by $u_Y$. As explained in \cite[Proof of Proposition 12]{Beau2}, $u_X$ is a symplectic automorphism, and the fixed locus $B\subset X$ of $u_X$ is isomorphic to $E\times E^\prime$, where $E$ and $E^\prime$ are the elliptic curves 
    \[ \begin{split} E&:= Y\cap \{ X_0=X_1=X_2=0\}\ ,\\
                       E^\prime&:= Y\cap \{ X_3=X_4=X_5=0\}\ .\\
                       \end{split}\]
  What's more, as $u_X$ is symplectic, $\tau\colon B\to X$ is the inclusion of a symplectic submanifold and so restriction induces an isomorphism
   \[    \tau^\ast    \colon\ \ H^{2,0}(X)\ \to\ H^{2,0}(B)\ .\]   
  This implies there is also an isomorphism of transcendental lattices
    \[    \tau^\ast    \colon\ \ H^{2}_{tr}(X)\ \to\ H^{2}_{tr}(B)\ \]
    (here $H^2_{tr}()\subset H^2()$ is defined as the smallest Hodge sub--structure containing $H^{2,0}$).   
   A bit more formally, this means there is an isomorphism of homological motives
   \begin{equation}\label{isomot} \Gamma= {}^t\Gamma_\tau+ \Gamma^\prime \colon\ \ \ (X,\pi^X_{2},0)\ \xrightarrow{\cong}\ (B,\pi^B_{2,tr},0) \oplus  \LLL^{\oplus \rho}\ \ \ \hbox{in}\ \MM_{\rm hom}\ .\end{equation}
  Here $\LLL$ denotes the Lefschetz motive, and $\rho$ is the dimension of the N\'eron--Severi group of $X$. The projector $\pi^X_2$ is as in theorem \ref{fanomck}, and $\pi^B_{2,tr}$ is a projector defining the transcendental part of the motive of a surface as in \cite{KMP}.
   
   As both sides of (\ref{isomot}) are finite--dimensional motives (for the left--hand side, this follows from proposition \ref{fanofindim}), we can upgrade the isomorphism (\ref{isomot}) to an isomorphism of Chow motives
   \begin{equation}\label{isomotch} \Gamma\colon\ \ \ (X,\pi^X_{2},0)\ \xrightarrow{\cong}\ (B,\pi^B_{2,tr},0)\oplus \LLL^{\oplus \rho}\ \ \ \hbox{in}\ \MM_{\rm rat}\ .\end{equation}  
               Since isomorphic Chow motives have isomorphic Chow groups, the isomorphism (\ref{isomotch}) implies there is an induced isomorphism
      \[ (\pi^B_{2,tr} \circ \Gamma \circ \pi^X_2)_\ast  \colon\ \ \ (\pi^X_2)_\ast A^2_{hom}(X)\ \xrightarrow{\cong}\  (\pi^B_{2,tr})_\ast A^2_{hom}(B)\oplus 
      A^2_{hom}(\LLL^{\oplus \rho})\ .\]
      But $A^2_{(2)}()\subset A^2_{hom}()$, and $A^2_{hom}(\LLL)=0$, and so we find that   
            restriction induces an isomorphism
   \[  (\pi^B_{2,tr}\circ {}^t \Gamma_\tau)_\ast\colon\ \ \ A^2_{(2)}(X)\ \xrightarrow{\cong}\  ( \pi^B_{2,tr})_\ast A^2_{hom}(B)=A^2_{(2)}(B)=A^2_{AJ}(B)\ .\]
   (Here, the notation $A^2_{(2)}(B)$ refers to Beauville's splitting of the Chow ring of abelian varieties \cite{Beau}.)
   
  We are now in position to state a result that will serve as an intermediate step towards proving theorem \ref{main}:
  
  \begin{proposition}\label{intermed} Let $X$ be as in theorem \ref{main}. For any $b,b^\prime\in A^2_{(2)}(X)$, there is equality
   \[  b\times b^\prime - b^\prime\times b=0\ \ \ \hbox{in}\ A^4(X\times X)\ .\]
   \end{proposition}
   
   \begin{proof} Let $\Psi^\prime\in A^{2}(B\times X)$ be inverse to $ \Gamma$ in the isomorphism (\ref{isomotch}), and let
    \[ \Psi:=   \pi^X_{2}\circ \Psi^\prime\circ \pi^B_{2,tr}\ \ \ \in A^{2}(B\times X)\ .\]
    It follows formally that $\Psi$ induces isomorphisms
    \[ \Psi_\ast\colon\ \ \ A^2_{(2)}(B)\ \xrightarrow{\cong}\ A^2_{(2)}(X)\ .\]
    There is a commutative diagram
    \[ \begin{array}
       [c]{ccc}
       A^2_{(2)}(B)\otimes A^2_{(2)}(B) & \xrightarrow{\times} &A^4_{(4)}(B\times B)\\
      \ \ \ \ \ \ \ \ \ \ \ \ \ \  \downarrow{\scriptstyle (\Psi_\ast,\Psi_\ast)} && \ \ \ \ \ \ \ \  \ \ \ \ \downarrow{\scriptstyle (\Psi\times \Psi)_\ast}\\
       A^2_{(2)}(X)\otimes A^2_{(2)}(X) & \xrightarrow{\times} &A^4_{}(X\times X)\\
       \end{array}\]
       and (as we have just seen) the left--hand vertical arrow is an isomorphism. To prove proposition \ref{intermed}, it thus suffices to prove the corresponding statement for the abelian surface $B$; this is taken care of by a result of Voisin's:
       
     \begin{theorem}[Voisin \cite{Vo}]\label{thabvar} Let $B$ be an abelian variety of dimension $g$. For any $c,c^\prime\in A^g_{(g)}(B)$, there is equality
     \[  c\times c^\prime -(-1)^g\ c^\prime\times c=0\ \ \ \hbox{in}\ A^{2g}_{(2g)}(B\times B)\ .\]
     \end{theorem}
       
    (This is \cite[Example 4.40]{Vo}. The notation $A^\ast_{(\ast)}$ refers to Beauville's splitting of the Chow ring of an abelian variety \cite{Beau}.)
    
    To go from proposition \ref{intermed} to theorem \ref{main}, we apply theorem \ref{chowringfano}. Let us first suppose $j=2$, i.e. $a,a^\prime$ are zero--cycles in $A^4_{(2)}(X)$. According to theorem \ref{chowringfano}(\rom1), we can express $a$ and $a^\prime$ as
     \[   a= b\cdot \ell\ \ \ , a^\prime=b^\prime\cdot\ell\ ,\]
     for some (unique) $b,b^\prime\in A^2_{(2)}(X)$. It follows that
     \[ \begin{split} a\times a^\prime - a^\prime\times a &= (b\cdot\ell)\times (b^\prime\cdot\ell) - (b^\prime\cdot\ell)\times (b\cdot\ell)\\
                                                                                 &= (b\times b^\prime - b^\prime\times b)\cdot (\ell\times\ell)\\
                                                                                 &=0\ \ \  \hbox{in}  \ A^8(X\times X)\ ,\\
                                                                          \end{split}\]
                                                              where the last line follows from proposition \ref{intermed}. 
                                                              
          Next, let us suppose $j=4$, i.e. $a,a^\prime$ are zero--cycles in 
                                                              $A^4_{(4)}(X)$. According to theorem \ref{chowringfano}(\rom2), we can express $a$ and $a^\prime$ as
   \[ a=b_1\cdot b_2\ \ \ ,\ a^\prime=b_1^\prime\cdot b_2^\prime\ ,\]
   for some $b_1,b_2,b_1^\prime,b_2^\prime\in A^2_{(2)}(X)$. It follows that
   \[ \begin{split} a\times a^\prime  &= (b_1\cdot b_2)\times (b_1^\prime\cdot b_2^\prime)     \\
                                                                                 &= (b_1\times b_1^\prime)\cdot   (b_2\times b_2^\prime) \\
                                                                                 &= (b_1^\prime\times b_1)\cdot   (b_2^\prime\times b_2) \\
                                                                                 &= (b_1^\prime\cdot b_2^\prime) \times (b_1\cdot b_2)\\                                                                                 
                                                                                 &=   a^\prime\times a   \ \ \  \hbox{in}  \ A^8(X\times X)\ ,\\
                                                                       \end{split}\]
                                                                       where in the middle we have used proposition \ref{intermed}.
                                         This settles the case $j=4$. The remaining case $j=0$ is trivially true since $A^4_{(0)}(X)\cong\QQ$.          
    \end{proof}

 \begin{proposition}\label{fanofindim} Let $Y\subset\PP^5(\C)$ be a cubic as in theorem \ref{main}, and let $X=F(Y)$ be the Fano variety of lines in $Y$. Then $Y$ and $X$ have finite--dimensional motive.
  \end{proposition}  
    
 \begin{proof} To establish finite--dimensionality of $Y$ is an easy exercice in using what is commonly known as the ``Shioda inductive structure'' \cite{Shi}, \cite{KS}. Indeed, applying 
 \cite[Remark 1.10]{KS}, we find there exists a dominant rational map
   \[ \phi\colon\ \ \ Y_1\times Y_2\ \dashrightarrow\ Y\ ,\]
   where $Y_1, Y_2\subset\PP^3(\C)$ are smooth cubic surfaces defined as
     \[ \begin{split} &f(X_0,X_1,X_2)+V^3=0\ ,\\
                          &g(X_0,X_1,X_2)+W^3=0\ .\\
                       \end{split}\]
           The indeterminacy locus of $\phi$ is resolved by blowing up the locus $C_1\times C_2\subset Y_1\times Y_2$, where $C_1, C_2$ are cubic curves. As $Y_1,Y_2,C_1,C_2$ have finite--dimensional motive, the blow-up also has finite--dimensional motive. Since this blow--up dominates $Y$, the cubic $Y$ has finite--dimensional motive. 
           
       The main result of \cite{fano} now implies that the Fano variety $X=F(Y)$ also has finite--dimensional motive.            
        \end{proof}   
    
  \end{proof}

 \begin{corollary}\label{cor} Let $X$ be as in theorem \ref{main}. The Hodge sub--structure
     \[ \wedge^2 H^4(X)\ \subset\ H^8(X\times X) \]
     is supported on a divisor.
   \end{corollary}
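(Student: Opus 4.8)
The plan is to deduce corollary \ref{cor} from theorem \ref{main} by a decomposition of the diagonal argument \`a la Bloch--Srinivas. Write $V:=X\times X$ and let $\iota\colon V\to V$ be the involution interchanging the two factors. I would consider the self--correspondence
   \[ P:=\tfrac12\bigl(\Delta_V-\Gamma_\iota\bigr)\circ\bigl(\pi^X_4\times\pi^X_4\bigr)\ \ \ \in\ A^8(V\times V)\ ,\]
where $\pi^X_4\in A^4(X\times X)$ is the projector furnished by theorem \ref{fanomck} and $\pi^X_4\times\pi^X_4$ denotes the induced self--correspondence of $V$. Since $\pi^X_4\times\pi^X_4$ is an idempotent commuting with $\Gamma_\iota$, the correspondence $P$ is an idempotent; and a short bookkeeping with the K\"unneth decomposition $H^8(V)=\bigoplus_{i+j=8}H^i(X)\otimes H^j(X)$ (the Koszul sign $(-1)^{4\cdot 4}=1$ being harmless) shows that $P_\ast$ is the projector onto the antisymmetric part $\wedge^2 H^4(X)\subset H^4(X)\otimes H^4(X)$. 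Hence $\ima\bigl(P_\ast\colon H^8(V)\to H^8(V)\bigr)=\wedge^2 H^4(X)$, and it will be enough to prove that $P$ is rationally equivalent to a cycle supported on $D\times V$ for some divisor $D\subset V$: such a cycle induces, after a resolution $\wt D\to D$, the inclusion $\wedge^2 H^4(X)\subseteq\ima\bigl(H^6(\wt D)\to H^8(V)\bigr)$ asserted by the corollary.

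The key step will be to check that $P_\ast[v]=0$ in $A^8(V)=A_0(V)$ for every closed point $v\in V$. Writing $v=(x,x^\prime)$ with $x,x^\prime$ closed points of $X$, one has $[v]=[x]\times[x^\prime]$ in $A^8(V)$. By the very definition of the Fourier decomposition, $A^4_{(4)}(X)=(\pi^X_4)_\ast A^4(X)$, so
   \[ a:=(\pi^X_4)_\ast[x]\ \in\ A^4_{(4)}(X)\ ,\qquad a^\prime:=(\pi^X_4)_\ast[x^\prime]\ \in\ A^4_{(4)}(X)\ ;\]
together with the identities $(\pi^X_4\times\pi^X_4)_\ast([x]\times[x^\prime])=a\times a^\prime$ and $(\Gamma_\iota)_\ast(a\times a^\prime)=a^\prime\times a$, this gives
   \[ P_\ast[v]=\tfrac12\bigl(a\times a^\prime-a^\prime\times a\bigr)\ \ \ \hbox{in}\ A^8(V)\ .\]
By theorem \ref{main} in the case $j=4$, the right--hand side vanishes. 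Thus $P_\ast$ annihilates every closed point of $V$. (Only the case $j=4$ of theorem \ref{main} is used here: the $H^4$--component $(\pi^X_4)_\ast$ of a zero--cycle automatically lands in the ``deepest'' piece $A^4_{(4)}(X)$.)

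To conclude, I would invoke the classical Bloch--Srinivas mechanism: if a correspondence $Z\in A^{\dim V}(V\times V)$ has $Z_\ast[v]=0$ in $A_0(V)$ for all closed points $v\in V$, then, the base field $\C$ being a universal domain, $Z$ restricts to $0$ in $A_0(V_{\C(V)})$, and hence $Z$ is rationally equivalent to a cycle supported on $V^\prime\times V$ for some proper closed $V^\prime\subsetneq V$; enlarging $V^\prime$ to a divisor $D$ (possible as $\dim V\geq 2$), we get that $P$ is rationally equivalent to a cycle supported on $D\times V$. Together with the first paragraph this proves that $\wedge^2 H^4(X)$ is supported on the divisor $D$, i.e. corollary \ref{cor}. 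The main (and essentially the only) delicate point is this last spreading--out step, passing from ``$P_\ast$ kills all closed points'' to ``$P$ is supported over a divisor''; everything else is formal, and finite--dimensionality of the motive plays no further role here, having already been used in the proof of theorem \ref{main}.
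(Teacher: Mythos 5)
Your argument follows the paper's proof essentially verbatim: the same correspondence (up to the harmless factor $\tfrac12$), the same reduction to the $j=4$ case of theorem \ref{main} via $(\pi^X_4)_\ast[x]\in A^4_{(4)}(X)$ (theorem \ref{fanomck}), and the same Bloch--Srinivas spreading--out. The one genuine problem is in the final inference, which you dismiss as formal. With the (standard) convention $P_\ast\alpha=(p_2)_\ast\bigl(P\cdot p_1^\ast\alpha\bigr)$ --- the one you implicitly use when computing $P_\ast[v]$ as the fibre of $P$ over the first factor --- the Bloch--Srinivas argument puts the support of $P$ on $D\times V$ with $D$ a divisor in the \emph{first} factor. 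But a correspondence supported on $D\times V$ acts on cohomology by factoring through the restriction $H^8(V)\to H^8(\wt D)$; its image is the image of a correspondence out of $\wt D$, and is not in general contained in the Gysin image $\ima\bigl(H^6(\wt D)\to H^8(V)\bigr)$. The inclusion into a Gysin image from a divisor --- which is what ``supported on a divisor'' means, and what the paper asserts (support on $X\times X\times D$, i.e. divisor in the \emph{second} factor) --- comes from the other side: if $P=(\ide\times j)_\ast P^\prime$ with $j\colon\wt D\to V$, then $P_\ast\alpha=j_\ast(P^\prime_\ast\alpha)\in\ima\bigl(H^6(\wt D)\to H^8(V)\bigr)$. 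So as written, your last step does not follow from what you proved.

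The repair is short, and you should make it explicit. Cheapest route: observe that ${}^tP$ also annihilates $A_0(V)$, since $({}^tP)_\ast([x]\times[x^\prime])=\tfrac12\bigl(({}^t\pi^X_4)_\ast[x]\times({}^t\pi^X_4)_\ast[x^\prime]-({}^t\pi^X_4)_\ast[x^\prime]\times({}^t\pi^X_4)_\ast[x]\bigr)$ and ${}^t\pi^X_4=\pi^X_4$ (the CK decomposition of \cite{SV} is self--dual); applying your spreading--out argument to ${}^tP$ and transposing puts the support of $P$ on $V\times D$, which is the side needed. Alternatively, keep your $D\times V$ and add a hard Lefschetz step: since $\dim\wt D=7$, one has $H^8(\wt D)=h\cup H^6(\wt D)$, which comes via Gysin from a smooth hyperplane section of $\wt D$ of dimension $6$, and the image of the resulting codimension--$7$ correspondence is supported on the projection of its support to $V$, a closed subset of dimension at most $7$, hence contained in a divisor of $V$. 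With either repair your proof coincides with the paper's; the rest of your write--up (the factor $\tfrac12$, the identification of $P_\ast$ with the projector onto $\wedge^2H^4(X)$, and the universal--domain injectivity with $\QQ$--coefficients) is correct.
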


 \begin{proof} As noted in \cite{V9}, this follows Bloch--Srinivas style from the truth of conjecture \ref{conjvois}. The idea is as follows. Let 
   \[\iota\colon X\times X\to X\times X\]
   denote the involution switching the two factors. The correspondence
   \[ \wedge^2 \pi_4^X := \bigl( \Delta_{X\times X} -\Gamma_\iota\bigr)\circ (\pi^X_4\times \pi^X_4)\ \ \ \in A^8(X^4) \]
   acts on cohomology as a projector on $\wedge^2 H^4(X)\ \subset\ H^8(X\times X)$. 
   On the other hand, the $j=4$ case of theorem \ref{main} implies that
     \[  (\wedge^2 \pi^X_4)_\ast A^8(X\times X) =0\ .\]
   Using the Bloch--Srinivas argument \cite{BS}, this implies the correspondence $\wedge^2 \pi^X_4$ is rationally equivalent to a cycle supported on $X\times X\times D$ where $D\subset X\times X$ is a divisor. Returning to the action of $\wedge^2 \pi^X_4$ on cohomology, this implies
   \[    \wedge^2 H^4(X)=(   \wedge^2 \pi^X_4)_\ast H^8(X\times X) \]
   is supported on $D$.    
 \end{proof}
 
 \begin{remark}
 \normalfont Using the work of Rie\ss\, \cite{Rie}, it is possible to extend theorem \ref{main} to all hyperk\"ahler varieties birational to the Fano variety $X$ of theorem \ref{main}.
 \end{remark}
 
 \begin{remark}\normalfont The family of Fano varieties of theorem \ref{main} has been much studied, starting with Namikawa's work \cite{Nam}. It also appears in \cite{Kawa}, and in
  \cite[Proposition 12]{Beau2} (where it is proven that for appropriate choices of $f$ and $g$, the Fano variety $X$ is ``$N^2$--maximal'' in the sense that
   \[ \dim_{\QQ} N^2 H^4(X) =\dim_\C H^{2,2}(X,\C)\ .)\] 
  \end{remark} 
 
 \begin{remark}\normalfont As the expert reader will have noticed, the crux of the proof of theorem \ref{main} is the existence of a symplectic automorphism of $X$ for which the fixed locus is an abelian surface. As such, it is natural to ask whether there are other families of Fano varieties of cubic fourfolds with this property. 
 
 However, if one adds the condition that the automorphism is of primary order and polarized, the Fano varieties as in theorem \ref{main} are the only Fano varieties of cubic fourfolds with this property (this follows from the classification given in \cite{LFu3}; the case studied in theorem \ref{main} appears as ``family IV-2'' in \cite[Theorem 0.1]{LFu3}, and there are no other instances where the fixed locus is an abelian surface).
 
 \end{remark}

\vskip1cm
\begin{nonumberingt} Thanks to all participants of the Strasbourg 2014/2015 ``groupe de travail'' based on the monograph \cite{Vo} for a very stimulating atmosphere.
Many thanks to Kai and Len for always respecting the rule not to bring toys into my office.
\end{nonumberingt}

\vskip1cm

\end{document}